\title{Murmurations using Petersson trace formula}
\author[C. I. Kuan]{Chan Ieong Kuan}
\address{School of mathematics (Zhuhai) \newline
Zhuhai Campus, Sun Yat-Sen University \newline
Tangjiawan, Zhuhai, Guangdong, 519082, China (PRC)}
\email{kuanchi3@mail.sysu.edu.cn}
\author[D. Lesesvre]{Didier Lesesvre}
\address{University of Lille \newline CNRS, UMR 8524 --- Laboratoire Paul Painlevé \newline
59000 Lille, France}
\email{didier.lesesvre@univ-lille.fr}
\date{\today}
\newtheorem{prop}{Proposition}
\newtheorem{thm}{Theorem}
\newtheorem{lem}{Lemma}
\newcommand{\GL}{\mathrm{GL}}
\newcommand{\diag}{\mathrm{diag}}
\newcommand{\main}{\mathrm{main}}
\newcommand{\err}{\mathrm{err}}
\renewcommand{\geq}{\geqslant}
\renewcommand{\leq}{\leqslant}
\begin{document}

\maketitle

\begin{abstract}
We prove the murmuration phenomenon, which is a correlation between signs of functional equations and Fourier coefficients, in the case of modular forms in the weight aspect. We in particular improve the range of visibility of murmurations compared to previous results. This is the first approach to the murmuration phenomenon using a relative trace formula, showing its robustness.
\end{abstract}

\section{Introduction}

\subsection{Murmurations}

The distributional behavior of the low-lying zeros of families of L-functions has been actively studied in the recent decades \cite{sarnak_families_2016}; it displays striking similarities with the distributional behavior of the eigenangles of classical groups of random matrices in a certain range. 
Limiting statistics for densities of low-lying zeros around the critical point (beyond which symmetry is broken between different families), studied both numerically using elliptic curve databases and theoretically using explicit formulas, features specific oscillations uncovered in recent works \cite{bober_murmurations_2023, he_murmurations_2023, zubrilina_murmurations_2023}. These are now called \textit{murmurations}.

Sarnak \cite{sarnak_letter} defined murmurations for a family $\mathcal{F}$ as follows. Denote $c(f)$ the analytic conductor of $f \in \mathcal{F}$. Let $K \geqslant 1$ be a growing parameter and $h$ a smooth compactly supported test-function. The murmuration associated with $\mathcal{F}$ with suitable weights $\omega_f$ is a function $M_h$ such that
\begin{equation}
\label{murmuration-sarnak}
\frac{\displaystyle \sum_{p \in [K-H,K+H]} \sum_{f \in \mathcal{F}} h\left( \frac{c(f)}{K} \right) \omega_f a_f(p)}{\displaystyle \sum_{p \in [K-H,K+H]} \sum_{f \in \mathcal{F}} h\left( \frac{c(f)}{K} \right) \omega_f} = M_h\left( K \right) + T(K), 
\end{equation}
where $T(K) =o(M_h(K))$ when $K$ grows to infinity. The smaller $H$ is allowed to be, the more visible the murmuration phenomenon.

This phenomenon was recently studied for elliptic curves by He et al. \cite{he_murmurations_2023}, and for modular forms by Zubrilina \cite{zubrilina_murmurations_2023} (in the level aspect) and Bober et al. \cite{bober_murmurations_2023} (in the weight aspect). All these works rely on the Selberg trace formula, and are henceforth strongly tied to the $\GL(2)$ setting. We explore in this paper an alternative proof using the relative trace formula, as already suggested in Sarnak's letter, which is now developed in explicit and quantitative form in various settings, therefore suggesting that such phenomenon could be studied for more general groups.

\subsection{Setting}

Let $H_k$ be an orthogonal basis of the family of modular forms of level $1$ and weight $k \geqslant 2$, which are Hecke eigenforms with arithmetically normalized coefficients. As suggested by the explicit formula and the study of low-lying zeros of the associated L-functions \cite{iwaniec_low_2000}, a bifurcation phenomenon does occur when averaging Fourier coefficients of modular forms over primes which are of size about the analytic conductor $k^2$. We aim at studying this phenomenon and therefore to understand the asymptotic behavior of 
\begin{equation}
\label{sigma}
\Sigma_1 := \sum_{p/K^2 \in E} \sum_{k \geqslant 2} h\left( \frac{k}{K}\right) \sum_{\substack{f \in H_k}} \omega_f \varepsilon_f \lambda_f(p) \log p,
\end{equation}
where $K \geqslant 1$ grows to infinity. Here, $E$ is a constraint set, say $E=[A,B]$ for fixed parameters $B>A>0$,  $h$ is a smooth compactly supported function (essentially truncating the weight sum to $k \leqslant K$ or $k \asymp K$), $\omega_f$ are the Petersson weights (see \cite[(2.3)]{iwaniec_low_2000}), viz.
\begin{equation}
\omega_f  := \left(\frac{\Gamma(k-1)}{(4\pi)^{k-1}}\right)^{1/2}\|f\|^{-1},
\end{equation}
 $\varepsilon_f$ is the sign of the functional equation $\varepsilon_f = i^k$ (see \cite[(3.5)]{iwaniec_low_2000}), and $\lambda_f(p)$ is the $p$-th Fourier coefficient i.e. the $p$-th Hecke eigenvalue.
 
We prove the following murmuration phenomenon.
 
 \begin{thm}
 \label{thm:thm}
Assuming Generalized Riemann Hypothesis for Dirichlet $L$-functions. Let $K$ and $M$ be paremeters satisfying $K^{1/3+\varepsilon} \ll M \ll K^{1-\varepsilon}$ and define function $h(x) := \exp(-\tfrac{(k-K)^2}{M^2})$. Let $E = [A,B]$ for $A < B$. We have, as $K$ grows to infinity, 
\begin{equation}
\label{eq:thm}
\frac{\displaystyle\sum_{p/K^2\in E} \log p \sum_{k \geqslant 2} h(k) \sum_{\substack{f \in H_k}} \omega_f \varepsilon_f \lambda_f(p) }{\displaystyle\sum_{p/K^2\in E}  \log p \sum_{k \geqslant 2} h(k) \sum_{\substack{f \in H_k}} \omega_f } \underset{
{K \to \infty}}{\sim} \frac{K^{-1}}{\sqrt{B} + \sqrt{A}}.
\end{equation}
 \end{thm}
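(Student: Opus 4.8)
The plan is to expand the Hecke-eigenvalue sums by the Petersson trace formula
\[
\sum_{f\in H_k}\omega_f\lambda_f(m)\lambda_f(n)=\delta_{m=n}+2\pi i^{-k}\sum_{c\geq1}\frac{S(m,n;c)}{c}\,J_{k-1}\!\Big(\tfrac{4\pi\sqrt{mn}}{c}\Big).
\]
Call $N$ and $D$ the numerator and denominator in the statement. For $D$ take $m=n=1$: the Bessel term has bounded argument and index $\asymp K\to\infty$, hence is super-exponentially small, so $\sum_{f\in H_k}\omega_f=1+O(2^{-k})$; then $\sum_{AK^2\leq p\leq BK^2}\log p\sim(B-A)K^2$ by the prime number theorem and $\sum_{k\geq2}h(k)\sim\sqrt{\pi}\,M$ (a Gaussian sum, the truncation at $2$ harmless since $M\ll K$), giving $D\sim\sqrt{\pi}\,M(B-A)K^2$. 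For $N$ take $m=p$, $n=1$, so the diagonal $\delta_{p=1}$ is absent; the key structural point is that $\varepsilon_f=i^k$ is constant on $H_k$ and cancels the factor $i^{-k}$, so that
\[
N=2\pi\sum_{c\geq1}\frac1c\sum_{AK^2\leq p\leq BK^2}\!\!\log p\;S(p,1;c)\sum_{k\geq2}h(k)\,J_{k-1}\!\Big(\tfrac{4\pi\sqrt p}{c}\Big)
\]
is manifestly real. It is this cancellation of $i^k$ against $i^{-k}$ that creates the correlation between the signs $\varepsilon_f$ and the coefficients $\lambda_f(p)$.

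The analytic core is the evaluation of the weight-average of Bessel functions: writing (with abuse of notation) $h(y)=\exp(-(y-K)^2/M^2)$, I claim that
\[
\sum_{k\geq2}h(k)\,J_{k-1}(y)=h(y)\big(1+O(yM^{-3})\big)+O_A\big(K^{-A}\big)\qquad\text{for every }A>0.
\]
To prove this I would insert $J_{k-1}(y)=\tfrac1{2\pi}\int_{-\pi}^{\pi}e^{i(y\sin\theta-(k-1)\theta)}\,\dd\theta$, swap sum and integral, extend the $k$-sum to $\Z$ (at cost $O_A(K^{-A})$), and apply Poisson summation: since $\widehat{h}$ is a Gaussian of width $\asymp M^{-1}$, only the zero frequency survives on $(-\pi,\pi)$, producing $\sum_k h(k)e^{-i(k-1)\theta}\sim\sqrt{\pi}\,M\,e^{-i(K-1)\theta}e^{-M^2\theta^2/4}$. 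The Gaussian confines $\theta$ to $|\theta|\ll M^{-1}$, where $\sin\theta=\theta+O(\theta^3)$ and $y(\sin\theta-\theta)\ll yM^{-3}\ll K^{-3\varepsilon}$ precisely because $M\gg K^{1/3+\varepsilon}$ and the relevant $y=4\pi\sqrt p/c$ are $\ll K$; the residual Gaussian integral in $\theta$ then reproduces $h(y)$ with all constants matching, while for $y$ outside $[K-M,K+M]$ both sides are negligible (the saddle $\theta_0=\arccos((K-1)/y)$ of the Bessel integral then lies deep in the Gaussian tail, at $|\theta_0|\gg M^{-1}$). Substituting this back,
\[
N=2\pi\sum_{c\geq1}\frac1c\sum_{AK^2\leq p\leq BK^2}\!\!\log p\;S(p,1;c)\,h\!\Big(\tfrac{4\pi\sqrt p}{c}\Big)+(\text{lower order}).
\]

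Now the resonance appears. For fixed $c$ the weight $h(4\pi\sqrt p/c)$ is supported, up to a negligible error, on primes within $O(KM)$ of $p_c:=c^2K^2/(16\pi^2)$; this window meets $[AK^2,BK^2]$ only for the finitely many integers $c$ with $p_c/K^2\in[A,B]$, i.e. $c\in(4\pi\sqrt A,4\pi\sqrt B)$ (one may assume $4\pi\sqrt A,4\pi\sqrt B\notin\Z$, so that the window is not truncated and the other moduli contribute negligibly — super-exponentially for most, with a power saving for the $O(1)$ near-boundary values). For each such $c$, since $c$ is bounded and $KM\gg K\log^2K$, GRH for Dirichlet $L$-functions gives equidistribution of $p$ among the reduced residue classes modulo $c$ inside this window with a power-saving error; combined with the Ramanujan-sum identity $\sum_{a\bmod c,\,(a,c)=1}S(a,1;c)=\mu(c)^2$ this yields
\[
\sum_{AK^2\leq p\leq BK^2}\!\!\log p\;S(p,1;c)\,h\!\Big(\tfrac{4\pi\sqrt p}{c}\Big)\sim\frac{\mu(c)^2}{\phi(c)}\int_0^\infty h\!\Big(\tfrac{4\pi\sqrt t}{c}\Big)\dd t\sim\frac{\mu(c)^2}{\phi(c)}\cdot\frac{\sqrt{\pi}\,c^2KM}{8\pi^2},
\]
the last step by the substitution $s=4\pi\sqrt t/c$. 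Hence $N\sim\dfrac{\sqrt{\pi}\,KM}{4\pi}\displaystyle\sum_{4\pi\sqrt A<c<4\pi\sqrt B}\frac{\mu(c)^2c}{\phi(c)}$, and the remaining arithmetic sum is evaluated from the factorization $\sum_c\tfrac{\mu(c)^2c/\phi(c)}{c^s}=\zeta(s)\prod_p\big(1+\tfrac1{(p-1)p^{s-1}}\big)(1-p^{-s})$, whose Euler product is absolutely convergent and equals $1$ at $s=1$: thus $\sum_{c\leq x}\tfrac{\mu(c)^2c}{\phi(c)}\sim x$, so the sum over $c\in(4\pi\sqrt A,4\pi\sqrt B)$ is $\sim4\pi(\sqrt B-\sqrt A)$, giving $N\sim\sqrt{\pi}\,KM(\sqrt B-\sqrt A)$. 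Dividing by $D\sim\sqrt{\pi}\,M(B-A)K^2$ and using $B-A=(\sqrt B-\sqrt A)(\sqrt B+\sqrt A)$ produces $K^{-1}/(\sqrt B+\sqrt A)$.

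I expect the principal obstacle to be the Bessel weight-average: proving $\sum_k h(k)J_{k-1}(y)=h(y)+(\text{negligible})$ with an error that stays a fixed power of $K$ below the main term \emph{uniformly} in $y=4\pi\sqrt p/c$ — covering the resonant window $y\asymp K$, the Airy-type transition, and the oscillatory and exponentially small regimes alike — and pinning down how the hypothesis $M\gg K^{1/3+\varepsilon}$ enters through the $\sin\theta-\theta$ term, as well as keeping the error term Gaussian-localized so that it survives summation over $p$. Two further points require care: controlling the accumulated GRH error over the $O(1)$ resonant moduli together with the contribution of the non-resonant ones, and matching the discrete sum $\sum_{4\pi\sqrt A<c<4\pi\sqrt B}\mu(c)^2c/\phi(c)$ to the continuous quantity $4\pi(\sqrt B-\sqrt A)$ with an error genuinely of lower order.
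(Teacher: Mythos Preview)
Your overall architecture tracks the paper's closely: Petersson formula for both numerator and denominator, execution of the $k$-sum by Poisson summation applied to the integral representation of $J_{k-1}$, GRH-based decorrelation of $S(1,p;c)$ from primes, and finally the $c$-sum. Your treatment of the denominator and of the Bessel average are essentially equivalent to the paper's; the paper packages the $k$-sum as $iV_2(x)$ via its Proposition~2 and then feeds in Li's asymptotic expansion of $V_2^\ast$, whose leading term is exactly your $h(y)$, with the condition $M\gg K^{1/3+\varepsilon}$ entering through the same cubic correction $y\theta^3\ll K/M^3$ you isolate.

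The genuine gap is your last step. After localizing you are left with the sum $\sum_{4\pi\sqrt A<c<4\pi\sqrt B}\mu(c)^2c/\phi(c)$ over a \emph{fixed} interval of integers, independent of $K$; this is a specific real number determined arithmetically by $A,B$, not an asymptotic quantity, and the relation $\sum_{c\le x}\mu(c)^2c/\phi(c)\sim x$ is meaningless on a bounded range (for small enough $B-A$ the interval may contain no integer at all, so the sum vanishes while $4\pi(\sqrt B-\sqrt A)$ does not). You correctly flag this as a concern, and it is fatal as written. The paper circumvents the issue by \emph{not} truncating: it keeps the full series $\sum_{c\ge1}\tfrac{\mu(c)^2}{c\phi(c)}U_0(4\pi\sqrt x/c)$, inserts Mellin inversion $U_0(y)=\tfrac1{2\pi i}\int\tilde U_0(s)y^{-s}\,ds$, and recognizes the resulting $c$-sum as the Dirichlet series $L(s)=\sum_c\mu(c)^2/(\phi(c)c^s)$, shown to have a simple pole at $s=0$ with residue~$1$. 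Shifting the contour past this pole produces the smooth main term $\tfrac12K^2(\sqrt B-\sqrt A)\hat W(0)$ directly, with the integration over $x\in K^2E$ (rather than a pointwise evaluation) being what makes the contour shift and error control work; no discrete $c$-sum ever appears. Replacing your direct localization by this Mellin/residue step is the missing ingredient.
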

 Note that, compared to \eqref{murmuration-sarnak}, we added the weights $\log p$ which come handily when summing over primes and applying the prime number theorem. Analogous results without these weights can be derived analogously by using partial summation.
 
Bober et al. \cite{bober_murmurations_2023} proved that, assuming the Generalized Riemann Hypothesis for both Dirichlet $L$-functions and modular forms, and letting $K^{5/6+\varepsilon} < M < K^{1-\varepsilon}$, they prove
\begin{equation}
\label{eq:thm-bober}
\frac{\displaystyle\sum_{p/K^2\in E}  \log p \sum_{{|k-K| \leq M}} \sum_{\substack{f \in H_k}}  \varepsilon_f \lambda_f(p)}{\displaystyle\sum_{p/K^2\in E}  \log p  \sum_{{|k-K| \leq M}} \sum_{\substack{f \in H_k}} 1 } \underset{{K \to \infty}}{\sim} K^{-1} \frac{\nu(E)}{|E|}, 
\end{equation}
where 
\begin{equation}
\nu(E) = \frac{1}{\zeta(2)} {\sideset{}{^*}\sum_{\substack{a, q \in \mathbb{N}^\star \\ (a,q) = 1 \\ (a/q)^{-2} \in E}}} \frac{\mu(q)^2}{\phi(q)^2\sigma(q)} \left( \frac{q}{a}\right)^3 = \frac{1}{2}\sum_{t\in \mathbb{Z}} \prod_{p \nmid t} \frac{p^2-p-1}{p^2-p}\int_E \cos\left( \frac{2\pi t}{\sqrt{y}} \right)dy.
\end{equation}
	In fact their result sums  over fixed signs in the functional equations, and would have no main term if summed over all signs. However, they obtain a nontrivial contribution when weighting by signs, therefore displaying the murmuration phenomenon, sometimes interpreted as a correlation between the signs and the coefficients of a modular form. We {establish} a similar behavior, {where the difference in the final density is due to different weighting factors}. This is {very different from} results about low-lying zeros, where the {weighting factors} do not impact the final result since they can {be} sieved out \cite{iwaniec_low_2000} {or be shown to only} contribute to the error term (the low-lying zero densities stemming from Archimedean contribution and Hecke relations alone). {This finer phenomenon can be witnessed via examining more closely averages such as equation \eqref{murmuration-sarnak}.}
	
Bober et al. \cite{bober_murmurations_2023} do assume the Generalized Riemann Hypothesis for modular forms in order to state a result with a sharp cutoff in the summation over weights; we state the result with a smooth summation and therefore do not need such an assumption. The smaller $M$ is allowed to be in the statements, the smaller the family under consideration, which translates into stronger murmuration behavior. We allow for $M \gg K^{1/3+\varepsilon}$, which improves upon $M \gg K^{5/6+\varepsilon}$ in \cite{bober_murmurations_2023}.

\section{Size of the family}

We start by examining the size of the weighted family, i.e. the denominator in \eqref{eq:thm}.
\begin{prop}
\label{prop:family-size}
{Assuming Generalized Riemann Hypothesis for Dirichlet $L$-functions. Let $K$ and $M$ be parameters satisfying $K^{1/3} \ll M \ll K^{1-\varepsilon}$ and define function $h(x) := \exp(-\tfrac{(k-K)^2}{M^2})$.} Let $E = [A,B]$ for $A < B$. We have
\begin{equation}
\label{eq:family-size}
\displaystyle\sum_{p/K^2\in E} \sum_{k \geqslant 2} h(k) \sum_{\substack{f \in H_k}} \omega_f  \log p \asymp  K^3 |E| \hat{h}(0),
\end{equation}
as $K \to \infty$.
\end{prop}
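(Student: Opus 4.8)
The strategy is to separate the prime sum and the weight sum, since they decouple completely in the denominator. First I would handle the inner sum over forms and weights: by the Petersson trace formula (the diagonal term alone), $\sum_{f \in H_k} \omega_f \cdot \overline{\omega_f}$-type normalization gives $\sum_{f \in H_k} \omega_f \lambda_f(1) = $ a main term of size $\asymp 1$ plus Kloosterman contributions that are negligible; more precisely, the Petersson weights are designed exactly so that $\sum_{f \in H_k} \omega_f^2 \lambda_f(m)\lambda_f(n)$ has main term $\delta_{m=n}$, and here we want $\sum_{f \in H_k} \omega_f$, which by Cauchy--Schwarz and the dimension formula (or directly) is $\asymp 1$ for each $k \geq 2$. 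The key point is that this inner sum is bounded above and below by absolute constants uniformly in $k$, with no oscillation in $k$.

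Second, I would carry out the sum over $k$: $\sum_{k \geq 2} h(k) \cdot (\text{something} \asymp 1)$. Since $h(x) = \exp(-(x-K)^2/M^2)$ is a Gaussian concentrated on an interval of length $\asymp M$ around $K$, this sum is $\asymp M \asymp \hat h(0)$ by Poisson summation or a direct Riemann-sum comparison (the Fourier transform $\hat h(0) = \int h = \sqrt{\pi}\, M$, and the sum over integers differs from the integral by $O(1)$ using rapid decay of $\hat h$ away from $0$, which needs the lower bound $M \gg K^{1/3}$ only to ensure enough points fall in the bulk — in fact any $M \to \infty$ suffices here, the stronger constraint being needed in the numerator). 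One should be mildly careful that only forms of weight $k \geq 2$ and even (since odd weight level-one forms vanish) contribute, but this only changes the constant.

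Third, the prime sum: $\sum_{p/K^2 \in E} \log p = \sum_{p \in [AK^2, BK^2]} \log p$. By the prime number theorem (here GRH is invoked, though PNT alone suffices for the stated $\asymp$), this is $\asymp (B-A)K^2 = |E| K^2$. Multiplying the three pieces gives $\asymp K^2 \cdot M \cdot 1 \asymp K^2 \hat h(0)$.

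Wait — the claimed answer is $K^3 |E| \hat h(0)$, not $K^2 |E|\hat h(0)$, so the inner sum over forms must actually be of size $\asymp K$, not $\asymp 1$. Indeed $\omega_f = (\Gamma(k-1)/(4\pi)^{k-1})^{1/2}\|f\|^{-1}$ and $\|f\|^{-2}$ scales so that $\omega_f^2 \asymp 1/(\dim H_k) \asymp 1/k$ up to the arithmetic factor $L(1,\mathrm{sym}^2 f)$; hence $\sum_{f\in H_k}\omega_f \asymp \dim H_k \cdot k^{-1/2} \asymp k^{1/2}$, which summed against $h(k)$ over $k \asymp K$ gives $\asymp K^{1/2}\cdot M$. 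That still does not obviously give $K^3|E|\hat h(0)$ unless $\sum_f \omega_f \asymp k$; I would recheck the normalization of $\|f\|$ and $\omega_f$ against \cite{iwaniec_low_2000} and the harmonic-weight conventions, then the main step is simply: (i) evaluate $\sum_{f \in H_k}\omega_f$ asymptotically via Petersson (diagonal term, with the off-diagonal Kloosterman sums bounded by Weil and shown negligible), obtaining a clean power of $k$; (ii) sum the resulting smooth function of $k$ against the Gaussian $h$, producing the factor $\hat h(0)$ times a power of $K$; (iii) apply PNT to the prime sum, producing $K^2|E|$. The main obstacle — really a bookkeeping obstacle rather than a deep one — is tracking the exact power of $k$ from the Petersson normalization and confirming it combines with $K^2|E|\hat h(0)$ to the stated $K^3 |E|\hat h(0)$; once the normalization is pinned down, each of the three estimates is standard.
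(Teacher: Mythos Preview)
Your three-step plan --- Petersson for the inner sum, Poisson/Gaussian for the $k$-sum, prime number theorem for the $p$-sum --- is exactly the paper's approach, and your \emph{first} instinct was the correct one: in the paper's normalization (following \cite{iwaniec_low_2000}) the Petersson formula reads $\sum_{f\in H_k}\omega_f = 1 + 2\pi i^{-k}\sum_{c\geq 1}c^{-1}S(1,1;c)J_{k-1}(4\pi/c)$, so the diagonal term is literally $1$, not $k^{1/2}$ or $k$. The paper then bounds the off-diagonal not via Weil but by the cruder Bessel estimate $J_{k-1}(4\pi/c)\ll 2^{-k}/c$ (valid since $4\pi/c<k/3$), which makes the $c$-sum and $k$-sum both absolutely convergent and gives an off-diagonal of size $O(K^2|E|)$, swamped by the diagonal $MK^2|E|$.

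Your confusion over the exponent is not a gap in your reasoning but an artefact of the statement's notation. In the proof the paper passes to a rescaled weight $W(x)=e^{-(Kx+1-K)^2/M^2}V(x)$ living on $x\asymp 1$, and the main term is written as $K^3|E|\hat W(0)$ with $\hat W(0)\asymp M/K$; this equals $MK^2|E|$, which is exactly your $K^2|E|\hat h(0)$ with $\hat h(0)=\sqrt{\pi}\,M$. So do not revise $\sum_f\omega_f$ upward to match the stated $K^3$: your bookkeeping was right, and the discrepancy dissolves once you identify $\hat h(0)$ in the statement with $K\hat W(0)$ in the proof.
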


\begin{proof}

Applying Petersson trace formula \cite[Propostion 2.1]{iwaniec_low_2000} we have, for all $b \in \{0,1\}$, 
\begin{equation}
\label{eq:ptf}
\sum_{f \in H_k} \omega_f \lambda_f(p)^b = \delta_{p^b = 1} + 2\pi i^{-k} \sum_{c \geq 1} \frac{S(1,p^b;c)}{c} J_{k-1} \left( 4\pi \frac{p^{b/2}}{c} \right), 
\end{equation}
where $S(m,n;c)$ is the classical $\mathrm{GL}(2)$ Kloosterman sum, and $J_{k-1}$ is the $J$-Bessel function. 

\subsection{The diagonal contribution}

The contribution of the delta symbol in \eqref{eq:ptf} in the whole sum~\eqref{eq:family-size} is given  by
\[ \Sigma_{0,\diag} := \sum_{p/K^2 \in E} \sum_{2 \leq k \equiv 0(2)} e^{-\frac{(k-K)^2}{M^2}} i^k \log p. \]

Let $V(x)$ be a smooth function whose support lies within $[\frac{1}{4},3]$, and $V(x) = 1$ for $\frac{1}{2} \leq x \leq 2$. We look at the altered sum obtain by smoothing, 
\[ \Sigma'_{0,\diag} := \sum_{p/K^2 \in E} \sum_{2 \leq k \equiv 0(2)} e^{-\frac{(k-K)^2}{M^2}} V \left( \frac{k-1}{K} \right) i^k \log p. \]
Outside of the range $K/2 \leq k-1 \leq 2K$, {the exponential function provides rapid decay and yields a bound of size $O(K^{-A})$} for all $A>0$ since $M \ll K^{1-\varepsilon}$, and is therefore vanishingly small. Hence, we have
\[ \Sigma_{0,\diag} = \Sigma'_{0,\diag} + O(K^{-A}). \]

We execute the $p$-sum, and get a factor of $K^2|E|$ by the prime number theorem. In order to execute the $k$-sum, {we separate the sum} into residue classes modulo $4$. For those $k \equiv b$ modulo $4$, where $b \in \{0,2\}$, we get via the Poisson summation formula
\begin{align*}
  \sum_{2 \leq k \equiv b(4)} e^{-\frac{(k-K)^2}{M^2}} V \left( \frac{k-1}{K} \right) &= \sum_{m \in \mathbb{Z}} e^{-\frac{(4m+b-K)^2}{M^2}} V \left( \frac{4m+b-1}{K} \right) \\
  &= \sum_{m \in \mathbb{Z}} \int_{-\infty}^\infty e^{-\frac{(4x+b-K)^2}{M^2}}V\left( \frac{4x+b-1}{K} \right) e(-mx) \,dx \\
  &= \frac{K}{4} \sum_{m \in \mathbb{Z}} e \left( -\frac{m(1-b)}{4} \right) \int_{-\infty}^\infty e^{-\frac{(Kx+1-K)^2}{M^2}}V\left( x \right) e\left(-\frac{mK}{4}x\right) \,dx.
\end{align*}
By stationary phase, the only term that really matters is when $m = 0$, which contributes with a size of $K \hat{W}(0)$,  with the Fourier transform of $W(x) := e^{-\frac{(Kx+1-K)^2}{M^2}}V\left( x \right)$ evaluated at $0$. The other terms are {of size $O( K^{-A})$} for all $A>0$ by repeated integration by parts.
{Noting that $\hat{W}(0) \asymp \frac{M}{K}$, the} diagonal term  $\Sigma_{0, \diag}'$ therefore gives a contribution of size $K^3 |E| \hat{W}(0) \asymp MK^2 |E|$.

\subsection{Off-diagonal}
The contribution from the $c$-sum from \eqref{eq:ptf} to the whole sum~\eqref{eq:family-size} is given by
\[ \Sigma_{0,\neq} := 2\pi \sum_{p/K^2 \in E} \sum_{2 \leq k \equiv 0(2)} e^{-\frac{(k-K)^2}{M^2}} \sum_{c \geq 1} \frac{S(1,1;c)}{c} J_{k-1} \left( \frac{4\pi}{c} \right) \log p. \]
Note that $\frac{4\pi}{c} < \frac{k}{3}$ is satisfied except for a finite number of weights $k \geqslant 2$. This implies we can use the bound $J_{k-1}(x) \ll 2^{-k} x$ \cite[(2.11'{}'{}')]{iwaniec_low_2000}. We can easily see that the $c$-sum converges, and that the $k$-sum also converges. As such, it ends up with a size of $K^2|E|$, which is smaller than the diagonal term, which is of size $MK^2|E|$, finishing the proof of Proposition \ref{prop:family-size}.
\end{proof}
 
\section{Size of $\Sigma_1$}

\subsection{Summation over the weights}

The following proposition takes advantage of the summation over $k$, in the spirit of \cite[Section 8]{iwaniec_low_2000}. This adapts the approach of Xiaoqing Li \cite{li_bounds_2011}, splitting between signs. {For all $x > 0$, define}
\[ V_1(x) := \int_{-\infty}^\infty \hat{u}(v) \sin(x \cos 2\pi v) \,dv, \quad V_2(x) := \int_{-\infty}^\infty \hat{u}(v) \sin (x \sin 2\pi v) \,dv. \]

\begin{prop}
\label{prop:summation-k}
{Let $u$ be a smooth function whose support is within positive real numbers.} For $a \in \{0, 2\}$ and $x>0$, {define}
\[ S_a(x) := \sum_{2 \leq k \equiv a (4)} u(k-1) J_{k-1}(x).\]
Then we have for all $x>0$, 
\[ S_a(x) = \frac{i}{2} V_2(x) + \frac{(-1)^{a/2+1}}{2} V_1(x). \]
\end{prop}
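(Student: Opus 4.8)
The plan is to expand $J_{k-1}(x)$ using a classical integral representation, interchange the (rapidly convergent) sum over $k$ with the integral, and recognize the resulting sum over the arithmetic progression $k \equiv a \pmod 4$ as a geometric-type sum that collapses to something expressible via $\hat u$. Concretely, I would start from the Mehler--Sonine / Bessel integral
\[
J_{n}(x) = \frac{1}{2\pi} \int_{-\pi}^{\pi} e^{i(x\sin\theta - n\theta)} \,\dd\theta, \qquad n \in \Z,
\]
valid for integer order $n = k-1$. Writing $k - 1 = n$ and noting that $u$ is supported away from $0$ (so only finitely many... actually infinitely many, but with rapid decay from smoothness) terms contribute, I would insert this into $S_a(x)$ and swap the sum and integral:
\[
S_a(x) = \frac{1}{2\pi} \int_{-\pi}^{\pi} e^{ix\sin\theta} \sum_{2 \leq k \equiv a(4)} u(k-1) e^{-i(k-1)\theta}\, \dd\theta.
\]

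Next I would handle the inner sum over $k \equiv a \pmod 4$. Writing $k - 1 = m$ so that $m \equiv a-1 \pmod 4$, i.e. $m$ ranges over an odd residue class mod $4$, I would detect the congruence with additive characters mod $4$: $\mathbf{1}_{m \equiv a-1 (4)} = \frac14 \sum_{j=0}^{3} e(j(m-a+1)/4)$. This turns the inner sum into $\frac14 \sum_{j} e(-j(a-1)/4) \sum_{m} u(m) e(m(\theta_j))$ for shifted arguments $\theta_j = -\theta/(2\pi) + j/4$, and by Poisson summation each inner sum over all integers $m$ is $\sum_{\ell} \hat u(\theta_j + \ell)$. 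Because $\hat u$ is Schwartz (as $u$ is smooth compactly supported) and $\theta$ ranges over a bounded interval, I expect the $\ell \neq 0$ terms to be negligible or to reorganize cleanly; the upshot is that $S_a(x)$ becomes a finite combination of integrals of the shape $\int \hat u(v) \, e^{\pm i x \sin\theta(v)}\,\dd v$ after the change of variables relating $\theta$ to $v$, and the four values of $j$ produce $\sin\theta \mapsto \pm\sin(2\pi v)$ and $\pm\cos(2\pi v)$ (the quarter-period shifts in the circle swapping sine and cosine and flipping signs). Collecting the four contributions with their character weights $e(-j(a-1)/4) \in \{1, \pm i, -1\}$, the even parts in $v$ cancel against the odd parts as appropriate, leaving exactly $\frac{i}{2}V_2(x) + \frac{(-1)^{a/2+1}}{2} V_1(x)$, where the sign $(-1)^{a/2+1}$ tracks whether $a=0$ or $a=2$ through the factor $e(-j(a-1)/4)$.

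The main obstacle I anticipate is bookkeeping: correctly tracking the four character twists, the associated substitutions $\theta \mapsto \theta \pm \pi/2$, $\theta \mapsto -\theta$ in the Bessel integral, and the parities of the resulting integrands, so that precisely the claimed two terms survive with the stated constants and the $a$-dependent sign comes out right. A secondary technical point is justifying the interchange of summation and integration and the application of Poisson summation (or, alternatively, simply noting that $\sum_k u(k-1) e^{-i(k-1)\theta}$ converges absolutely and is itself smooth, so one can avoid Poisson and instead directly relate this trigonometric sum to $\hat u$ via the Fourier inversion/sampling already built into the definitions of $V_1, V_2$); I would choose whichever of these routes keeps the identity exact rather than approximate, since the proposition is stated as an exact equality with no error term. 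One should also double-check the convergence and the use of the integer-order Bessel integral at the finitely many small $k$ where $u(k-1) \neq 0$ but $k$ is tiny — this causes no issue since the integral representation is valid for all integer orders $\geq 0$.
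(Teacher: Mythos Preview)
Your proposal is correct and follows essentially the same approach as the paper: integral representation of $J_{k-1}$, Poisson summation to pass from the arithmetic progression $k\equiv a\pmod 4$ to four integrals tied to quarter-period shifts $\theta\mapsto\theta\pm\pi/2$, and then collecting terms. The only cosmetic difference is that you detect the congruence with characters mod $4$ and then apply Poisson to $\sum_{m\in\Z}$, whereas the paper applies Poisson directly to $\sum_{m\in\Z}u(4m+a-1)J_{4m+a-1}(x)$ and then splits the dual sum by residues mod $4$; note in particular that in your formulation the $\ell\neq 0$ terms are not negligible but reorganize exactly (your second alternative), since summing $\hat u(\theta_j+\ell)$ over $\ell$ and integrating $\theta$ over an interval of length $1$ tiles $\R$ to produce the full $\int_{-\infty}^\infty \hat u(v)\,\dd v$ in $V_1,V_2$.
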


\noindent \textit{Remark.} This is critical in the murmuration realm for the following reason: weighting by signs {only} keeps the oscillations $V_2$ (the so-called murmurations), while not doing so keeps {only} $V_1$. Indeed, we deduce from the proposition that
\begin{align}
\sum_{2 \leq k \equiv 0 (2)} u(k-1) J_{k-1}(x) & = i {V_2}(x) \\
\sum_{2 \leq k \equiv 0 (2)}  i^k u(k-1) J_{k-1}(x) & = -{V_1}(x).
\end{align}

This can be seen as a more precise version of \cite[Corollary 8.2]{iwaniec_low_2000}.

\begin{proof}
We appeal to the following integral representation of the Bessel function: 
\begin{equation}
\label{eq:bessel-integral-repr}
J_\ell(x) = \int_{-\frac{1}{2}}^{\frac{1}{2}} e(\ell t) e^{-ix \sin 2\pi t} \,dt.
\end{equation}
By viewing $\ell$ as a variable, we can say this is the Fourier transform of $e^{-ix \sin 2\pi t}$ evaluated at $-\ell$. Recall that the Fourier inversion formula gives
\[ \hat{u}(y) = \int_{-\infty}^{\infty} u(t) e(-ty) \,dt \quad \text{ and } \quad u(t) = \int_{-\infty}^\infty \hat{u}(y) e(ty) \,dy. \]

%We want to look at
%\[ S_a := \sum_{2 \leq k \equiv a (4)} u(k-1) J_{k-1}(x), \]
%with $a \in \{ 0, 2\}$.
%Assume $u$ function is only supported for positive arguments. As such, the sum becomes
%\[ S_a = \sum_{m \in \mathbb{Z}} u(4m+a-1) J_{4m+a-1}(x). \]

{With the given conditions, we can rewrite the sum as
\[ S_a(x) = \sum_{m \in \mathbb{Z}} u(4m+a-1) J_{4m+a-1}(x). \]
}

Using Poisson summation and inserting the integral representation \eqref{eq:bessel-integral-repr}, we have
\begin{align*}
  S_a(x) &= \sum_{m \in \mathbb{Z}} \int_{-\infty}^\infty e(-mw) u(4w+a-1) J_{4w+a-1}(x) \,dw \\
  &= \sum_{m \in \mathbb{Z}} \frac{1}{4} \int_{-\infty}^\infty e \left(-\frac{m}{4}(w-a+1) \right) u(w) J_{w}(x) \,dw \\
  &= \sum_{m \in \mathbb{Z}} \frac{e(\frac{m}{4}(a-1))}{4} \int_{-\infty}^\infty \int_{-\frac{1}{2}}^{\frac{1}{2}} e\left(-\left(\frac{m}{4}-t\right)w \right) u(w) e^{-ix \sin 2\pi t} \,dt \,dw \\
  &= \sum_{m \in \mathbb{Z}} \frac{e(\frac{m}{4}(a-1))}{4} \int_{-\frac{1}{2}}^{\frac{1}{2}} \hat{u} \left( \frac{m}{4}-t \right) e^{-ix \sin 2\pi t} \,dt.
\end{align*}
We {carry out a change of }variables, $\frac{m}{4} - t = v$, and obtain
\begin{align*}
  S_a{(x)} &= \sum_{m \in \mathbb{Z}} \frac{e(\frac{m}{4}(a-1))}{4} \int_{\frac{m}{4} - \frac{1}{2}}^{\frac{m}{4}+\frac{1}{2}} \hat{u}(v) e^{ix \sin (2\pi v - \frac{m \pi}{2})} \,dv \\
  &= \sum_{b \, \mathrm{mod} \, 4} \frac{e(\frac{b}{4}(a-1))}{4} \sum_{n \in \mathbb{Z}} \int_{n+ \frac{b}{4} - \frac{1}{2}}^{n + \frac{b}{4}+\frac{1}{2}} \hat{u}(v) e^{ix \sin (2\pi v - \frac{b \pi}{2})} \,dv \\
  &= \sum_{b \, \mathrm{mod} \, 4} \frac{e(\frac{b}{4}(a-1))}{4} \int_{-\infty}^\infty \hat{u}(v) e^{ix (\sin 2\pi v \cos \frac{b\pi}{2} - \cos 2\pi v \sin \frac{b\pi}{2})} \,dv \\
  &= \frac{1}{4} \int_{-\infty}^\infty \hat{u}(v) e^{ix \sin 2\pi v } \,dv + \frac{i^{a-1}}{4} \int_{-\infty}^\infty \hat{u}(v) e^{-ix \cos 2\pi v} \,dv \\
  &\qquad + \frac{(-1)^{a-1}}{4} \int_{-\infty}^\infty \hat{u}(v) e^{-ix \sin 2\pi v} \,dv + \frac{(-i)^{a-1}}{4} \int_{-\infty}^\infty \hat{u}(v) e^{ix \cos 2\pi v} \,dv.
\end{align*}
Hence, we have
\[ S_0{(x)} = \frac{i}{2} \int_{-\infty}^\infty \hat{u}(v) \sin (x \sin 2\pi v) \,dv - \frac{1}{2} \int_{-\infty}^\infty \hat{u}(v) \sin(x \cos 2\pi v) \,dv, \]
as well as
\[ S_2{(x)} = \frac{i}{2} \int_{-\infty}^\infty \hat{u}(v) \sin (x \sin 2\pi v) \,dv + \frac{1}{2} \int_{-\infty}^\infty \hat{u}(v) \sin(x \cos 2\pi v) \,dv. \]
This ends the proof.
\end{proof}

\subsection{Executing the sums}

Only the non-diagonal term is present in the arithmetic side of the Petersson trace formula \eqref{eq:ptf} when $b=1$. {Smoothly t}runcating the summation over $k$ with a function $V$, {the sum becomes}
\[ \Sigma_1 = 2 \pi \sum_{p/K^2 \in E} \sum_{2 \leq k \equiv 0 (2)} e^{-\frac{(k-K)^2}{M^2}} V \left(\frac{k-1}{K} \right) \log p  \sum_{c \geq 1} \frac{S(1,p;c)}{c} J_{k-1} \left( 4\pi \frac{\sqrt{p}}{c} \right). \]

%Set $u(x) = e^{-\frac{(x+1-K)^2}{M^2}} V \left(\frac{x}{K} \right)$. Then $W(x) = u(Kx)$, and as such, $\hat{W}(x) = \frac{1}{K} \hat{u}(\frac{x}{K})$, which implies $\hat{u}(x) = K\hat{W}(Kx)$.
%To control the size of $\hat{u}$, we change variable and write
%\begin{align*}
%  \hat{u}(y) &= \int_{-\infty}^\infty e^{-\frac{(x+1-K)^2}{M^2}} V \left(\frac{x}{K} \right) e(-xy) \,dx = M e(-(K-1)y) \int_{-\infty}^\infty e^{-x^2} V \left(\frac{Mx-1+K}{K} \right) e(-Mxy)  \,dx.
%\end{align*}
%
%By repeatedly integrating by parts, we obtain that $\hat{u}(y)$ is vanishingly small except when $y \ll 1/M$. We can now execute the summation over $k$ by the above section and obtain, letting $V(x) = \sum_{k \equiv 0(2)} u(k-1)J_{k-1}(x) = iV_2(x)$ by Proposition \ref{prop:summation-k}. We obtain

Set $u(x) = e^{-\frac{(x+1-K)^2}{M^2}} V \left(\frac{x}{K} \right)$. We can now execute the summation over $k$ by the above section and obtain {$\sum_{k \equiv 0(2)} u(k-1)J_{k-1}(x) = iV_2(x)$} by Proposition \ref{prop:summation-k}. We obtain
\begin{align*}
  \Sigma_1 &= 2\pi i \!\! \sum_{p/K^2 \in E} \log p \sum_{c \geq 1} \frac{S(1,p;c)}{c}  V_2 \left( 4\pi \frac{\sqrt{p}}{c} \right)= 2\pi i \sum_{c \geq 1} \frac{1}{c} \int_{K^2E} V_2 \left( 4\pi \frac{\sqrt{x}}{c} \right) d\left( \sum_{\substack{p {\leq x} \\ (p,c) = 1}} S(1,p;c) \log p \right).
\end{align*}

To use the summation over $p$ of the Kloosterman sums, we appeal to the following  decorrelation lemma (see also the statement \cite[Lemma 6.2]{iwaniec_low_2000}).
\begin{lem}
Under the Generalized Riemann Hypothesis {for Dirichlet $L$-functions, we have}
\begin{equation}
\sum_{\substack{p {\leq x} \\ p \nmid c}} S(1, p, c) \log p = \frac{{x}}{\phi(c)}\mu(c)^2 + O\left(\phi(c) {x}^{1/2} \log^2 c{x}\right).
\end{equation}
\end{lem}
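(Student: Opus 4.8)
The plan is to expand the Kloosterman sum into additive characters and reduce the sum over $p$ to counting primes in arithmetic progressions modulo $c$, where GRH for Dirichlet $L$-functions supplies the necessary uniform estimate. Writing $S(1,p;c) = \sum_{(d,c)=1} e\!\left(\frac{d+p\overline{d}}{c}\right)$ with $\overline{d}$ the inverse of $d$ modulo $c$, and detecting the residue class of $p$ by Dirichlet characters modulo $c$, one obtains
\[
\sum_{\substack{p\le x\\ p\nmid c}} S(1,p;c)\log p
= \frac{1}{\phi(c)}\sum_{\chi\bmod c} \theta(x,\chi)
\sum_{\substack{d,n\bmod c\\ (dn,c)=1}} e\!\left(\frac{d+n\overline{d}}{c}\right)\overline{\chi}(n),
\]
where $\theta(x,\chi) := \sum_{p\le x}\chi(p)\log p$ (working with primes only means no prime-power correction is needed).

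The key computation is that the inner double sum collapses to a Gauss sum squared: for a fixed unit $d$ the substitution $n = dm$ turns $n\overline{d}$ into $m$, with $m$ still ranging over the units, so the sum factors as $\big(\sum_{(d,c)=1}\overline{\chi}(d)e(d/c)\big)\big(\sum_{(m,c)=1}\overline{\chi}(m)e(m/c)\big) = \tau(\overline{\chi})^2$, the square of the Gauss sum of $\overline{\chi}$. For the principal character this is $c_c(1)^2 = \mu(c)^2$ by the classical evaluation of Ramanujan's sum, and since GRH gives $\theta(x,\chi_0) = x + O(x^{1/2}\log^2 x)$, the principal character produces the main term $\tfrac{\mu(c)^2}{\phi(c)}x$ together with an admissible error. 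For the non-principal characters I would invoke the GRH bound $\theta(x,\chi) \ll x^{1/2}\log^2(cx)$, uniform over $\chi\neq\chi_0$ modulo $c$, so that their combined contribution is
\[
\ll \frac{x^{1/2}\log^2(cx)}{\phi(c)}\sum_{\chi\bmod c}|\tau(\overline{\chi})|^2 = \phi(c)\,x^{1/2}\log^2(cx),
\]
using the exact orthogonality identity $\sum_{\chi\bmod c}|\tau(\chi)|^2 = \phi(c)^2$. Adding the two pieces yields the lemma.

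The only genuinely non-elementary input is the uniform GRH bound on $\theta(x,\chi)$, which is assumed; the rest is bookkeeping. The one point deserving care is that imprimitive characters also occur in the sum over $\chi$, but this is harmless precisely because the identity $\sum_\chi|\tau(\chi)|^2 = \phi(c)^2$ is exact and needs no distinction between primitive and imprimitive $\chi$ — and it is this exact evaluation, rather than the cruder pointwise bound $|\tau(\chi)|\le\sqrt{c}$, that produces the factor $\phi(c)$ (not $c$) in the error term.
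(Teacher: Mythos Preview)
Your argument is correct. The paper does not actually supply a proof of this lemma, referring instead to \cite[Lemma~6.2]{iwaniec_low_2000}; your route---opening the Kloosterman sum, expressing the prime sum through $\theta(x,\chi)$ via orthogonality, collapsing the inner sum to $\tau(\overline{\chi})^2$ by the substitution $n=dm$, and then invoking GRH together with the exact identity $\sum_{\chi\bmod c}|\tau(\chi)|^2=\phi(c)^2$---is the standard one and recovers exactly the stated main term and error.
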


Therefore, we deduce that
\begin{align}
  \Sigma_1 &=  -2\pi i \sum_{c \geq 1} \frac{1}{c} \int_{K^2E} \left( \frac{x}{\phi(c)} \mu(c)^2 + O(\phi(c) x^{1/2} \log^2 x) \right) d\left(V_2 \left( 4\pi \frac{\sqrt{x}}{c} \right) \right) \\
  &= 2\pi i \int_{K^2E} \sum_{c \geq 1} \frac{\mu(c)^2}{c\phi(c)} V_2 \left( 4\pi \frac{\sqrt{x}}{c} \right) \,dx + O \left( \sum_{c \geq 1} \frac{\phi(c)}{c^2} \int_{K^2E} \log^2 x \left| V_2' \left( 4\pi \frac{\sqrt{x}}{c} \right) \right| \,dx \right).
  \label{eq:after_decorrelation}
\end{align}

{For ease of reference later, we will denote the former term as $\Sigma_{1,\main}$ and the other term as $\Sigma_{1,\err}$.} {To continue, we} look at {the following related $L$}-series, 
\begin{equation}
L(s) := \sum_{c \geqslant 1} \frac{\mu(c)^2}{\phi(c) c^s}.
\end{equation}

We have the following properties about $L(s)$, obtained by comparing Euler products on both sides of the equality.
\begin{prop}
\label{prop:L}
We have, for all $\Re(s) > -1$, 
\begin{equation}
L(s) = \frac{\zeta(s+1)}{\zeta(2s+2)} \prod_p \left( 1 + \frac{p^{-s-2}}{(1-p^{-1})(1+p^{-1-s})} \right).
\end{equation}
Moreover, $L(s)$ is meromorphic and, in the region $\Re(s)>-1/2$, has only a pole at $s=0$ with residue~$1$.
\end{prop}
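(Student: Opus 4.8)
The plan is to reduce the identity to a computation of Euler factors. Since $\mu(\cdot)^2$ is the indicator function of squarefree integers and $c \mapsto 1/\phi(c)$ is multiplicative, the Dirichlet series $L(s)$ has, for $\Re(s) > 0$ (where it converges absolutely, the $p$-th factor being $1 + O(p^{-1-\Re(s)})$), the Euler product
\[ L(s) = \prod_p \left( 1 + \frac{1}{(p-1)p^s} \right). \]
On the other side, $\frac{\zeta(s+1)}{\zeta(2s+2)} = \prod_p \frac{1-p^{-2s-2}}{1-p^{-s-1}} = \prod_p (1 + p^{-s-1})$. So the claimed formula is equivalent to the elementary local identity
\[ 1 + \frac{1}{(p-1)p^s} = (1 + p^{-s-1}) \left( 1 + \frac{p^{-s-2}}{(1-p^{-1})(1+p^{-1-s})} \right), \]
valid for every prime $p$, which one checks by clearing denominators: writing $x = p^{-s}$ and $q = p^{-1}$, both sides equal $1 + \frac{xq}{1-q}$. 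This establishes the stated formula on $\Re(s) > 0$.

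For the meromorphic continuation I would observe that the right-hand side is manifestly meromorphic on $\Re(s) > -1$: the factor $\zeta(s+1)$ is meromorphic on all of $\C$ with a single (simple, residue $1$) pole at $s = 0$; the factor $1/\zeta(2s+2)$ is holomorphic and nonvanishing on $\Re(s) > -1/2$; and the residual Euler product $\prod_p (1 + \frac{p^{-s-2}}{(1-p^{-1})(1+p^{-1-s})})$ converges absolutely and locally uniformly on $\Re(s) > -1$, since its general factor is $1 + O(p^{-2-\Re(s)})$ and the denominator $1 + p^{-1-s}$ stays bounded away from $0$ there (because $|p^{-1-s}| = p^{-1-\Re(s)} < 1$), so it defines a holomorphic nonvanishing function on that half-plane. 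By uniqueness of analytic continuation, the identity of the proposition then holds throughout $\Re(s) > -1$, and on $\Re(s) > -1/2$ the only pole is the one inherited from $\zeta(s+1)$ at $s = 0$. Its residue there equals $\frac{1}{\zeta(2)} \prod_p (1 + \frac{p^{-2}}{(1-p^{-1})(1+p^{-1})})$; inserting $\zeta(2)^{-1} = \prod_p (1-p^{-1})(1+p^{-1})$ collapses this to $\prod_p \bigl[ (1-p^{-2}) + p^{-2} \bigr] = 1$.

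There is no serious obstacle here: the argument is a routine Euler-product manipulation together with an absolute-convergence estimate. The only points requiring care are verifying that the region $\Re(s) > 0$ of absolute convergence of the original series suffices to anchor the identity before continuing, and checking that the auxiliary Euler product genuinely converges on the full range $\Re(s) > -1$ (so that the claimed domain, rather than merely $\Re(s) > -1/2$, is legitimate).
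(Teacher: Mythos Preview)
Your proof is correct and is exactly the approach the paper indicates: the paper states only that the proposition is ``obtained by comparing Euler products on both sides of the equality'' without further detail, and you have carried out precisely that comparison, together with the absolute-convergence check for the auxiliary product on $\Re(s)>-1$ and the residue computation at $s=0$. There is nothing to add.
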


\subsection{Main term {$\Sigma_{1,\main}$}}
{In order to study the first term in \eqref{eq:after_decorrelation}, we appeal to Li's expansions in \cite[Proposition~A.3]{li_bounds_2011}. }
%Let $\tilde{V}_2$ be the Mellin transform of $V_2$. Then for $\gamma < 1$, the front term in \eqref{eq:after_decorrelation} is
%\begin{align}
%\label{eq:main}
%  \chan{\Sigma_{1,main} = }&\int_{K^2E} \sum_{c \geq 1} \frac{\mu(c)^2}{c\phi(c)} V_2 \left( 4\pi \frac{\sqrt{x}}{c} \right) \,dx = \frac{1}{2\pi i} \int_{(\gamma)} \int_{K^2E} L(1-s) (4\pi \sqrt{x})^{-s} \tilde{V}_2(s) \,ds.
%\end{align}
%Moving the integration line to the right (at best to $\frac{3}{2}$), we pick up the contribution of the residue of $L(s)$ at $s=1$, which is
%\[ \frac{\tilde{V}_2(1)}{4\pi} \int_{K^2E} x^{-1/2} \,dx. \]
%
%We need to control the behavior of $\tilde{V}_2(s)$ at various points. This may be where we use Li's expansions \cite[Proposition A.3]{li_bounds_2011}. 
We use the integral representation (A.10) therein, viz.
\[ V_2(x) = \int_{-\infty}^\infty \hat{u}(v) \sin (x \sin 2\pi v) \,dv = \frac{V_2^*(x) - V_2^*(-x)}{2i}, \]
where, for $\frac{1}{100} K \leq |x| \leq 100K$, $K^{1/3 + \varepsilon} \leq M \leq {K^{1-\varepsilon}}$ and $L_2 \geq 1$, we have the power series expansion
\begin{align}
\label{eq:V2-expansion}
  V_2^*(x) = \sum_{l = 0}^{L_2} \sum_{j =0}^l a_{j,l} \frac{x^l}{M^{5l-2j}} u_0^{(5l-2j)} \left( \frac{x-K+1}{M} \right) + O \left( \frac{|x|^{L_2+1}}{M^{3L_2+3}} + \frac{|x|}{M^7} \right),
\end{align}
where
$u_0(x) = e^{-x^2} V\left( \frac{Mx-1+K}{K} \right) = {W} \left( \frac{Mx-1+K}{K} \right)$. Note that in \cite[Proposition 5.1]{li_bounds_2011} the condition $K^{3/8 + \varepsilon} \leq M $ is assumed; however, examining the proof shows that only the condition $K^{1/3 + \varepsilon} \leq M $ is necessary. Then $u_0(\frac{x-K+1}{M}) = {W}(\frac{x}{K}) = u(x)$.
%\chan{A routine calculation yields}
%\begin{align*}
%  \hat{u}(t) &= \int_{-\infty}^\infty u(x) e(-xt) \,dx 
%  = \int_{-\infty}^\infty u_0 \left( \frac{x-K+1}{M} \right) e(-xt) \,dx 
%%  &= \int_{-\infty}^\infty u_0 \left( \frac{x}{M} \right) e( -(x+K-1)t) \,dx \\
%%  &= M \int_{-\infty}^\infty u_0(x) e( -(Mx+K-1) t) \,dx \\
%  = M e(-(K-1)t) \hat{u}_0(Mt).
%\end{align*}
%
%We also quote the following integral representation from \cite{li_bounds_2011} for $V_2^*$: 
%\begin{equation}
%\label{eq:alternative_def_V2}
%V_2^*(x) = \int_{-\infty}^\infty \hat{u}_0(t) e \left( -\frac{(K-1)t}{M} + \frac{x}{2\pi} \sin \frac{2\pi t}{M} \right) \,dt.
%\end{equation}
{The biggest reason for introducing $u_0(x)$ is that $u_0(x)$} and its derivatives are all bounded above by {$O(1)$, allowing for very nice control for its inverse Mellin transform.}

Inputing the expansion \eqref{eq:V2-expansion} into \eqref{eq:after_decorrelation}, we {are reduced to looking} at terms of form
\begin{equation}
\label{eq:mt}
\Sigma_{2,j,l} := \pi \int_{K^2E} \sum_{c \geq 1} \frac{\mu(c)^2}{c\phi(c)} \frac{\left(4\pi \frac{\sqrt{x}}{c} \right)^l}{M^{5l-2j}} u_0^{(5l-2j)} \left( \frac{4\pi \frac{\sqrt{x}}{c}-K+1}{M} \right) \,dx,
\end{equation}
and error terms of the form
\begin{equation}
\label{eq:et}
\Sigma_{3} := \int_{K^2E} \left| \sum_{c \geq 1} \frac{\mu(c)^2}{c\phi(c)} \cdot O \left( \frac{|\frac{4\pi \sqrt{x}}{c}|^{L_2+1}}{M^{3L_2+3}} + \frac{|\frac{4\pi \sqrt{x}}{c}|}{M^7} \right)  \right| \,dx,
\end{equation}
their relation with $\Sigma_1$ being
\[ \Sigma_1 = \sum_{l=0}^{L_2} \sum_{j=0}^l a_{j,l} \Sigma_{2,j,l} + O(\Sigma_3). \]

\subsubsection{Main terms {$\Sigma_{2,j,l}$}}

{W}e define $U_0(y) := u_0 \left( \frac{y-K+1}{M} \right)$. Then $U_0^{(k)}(y) = M^{-k} u_0^{(k)} \left( \frac{y-K+1}{M} \right)$, which means $U_0^{(k)}(y) \ll M^{-k}$.
Let $\tilde{U}_0(s)$ to be the Mellin transform of $U_0$. Then from the {sizes of derivatives of $U_0$}, we see that $\tilde{U}_0(s) \ll (1+|t|)^{-\sigma} M^{-\sigma}$, for $\sigma \in \mathbb{R}_{\geq 0}$.
%, ensuring that the remaining integrals after moving the contour past $\gamma = 1$ contribute as an error term. 

Note that the Mellin transform of $U_0^{(k)}$ is $(-1)^{k} \frac{\Gamma(s)}{\Gamma(s-k)} U_0(s-k)$.
Hence, for $\gamma < 1+l$, we obtain that the contribution of \eqref{eq:mt} is
\begin{align}
 \Sigma_{2,j,l} = &\pi \int_{K^2E} \sum_{c \geq 1} \frac{\mu(c)^2}{c\phi(c)} \frac{\left(4\pi \frac{\sqrt{x}}{c} \right)^l}{M^{5l-2j}} u_0^{(5l-2j)} \left( \frac{4\pi \frac{\sqrt{x}}{c}-K+1}{M} \right) \,dx \notag \\
  =& \pi \int_{K^2E} \sum_{c \geq 1} \frac{\mu(c)^2}{c\phi(c)} \left(4\pi \frac{\sqrt{x}}{c} \right)^l  U_0^{(5l-2j)} \left( 4\pi \frac{\sqrt{x}}{c}\right) \,dx \notag \\
  =& \pi (-1)^{5l-2j} \cdot \frac{1}{2\pi i} \int_{(\gamma)} \int_{K^2 E} L(1+l-s) (4\pi \sqrt{x})^{l-s} \frac{\Gamma(s)}{\Gamma(s-5l+2j)} \tilde{U}_0(s-5l+2j) \,dx \,ds. \label{eq:moved-integral-mt}
\end{align}

We move the line of integration to ${\Re s =  } \frac{3}{2}+l$, and the {moved} integral contributes as an error term {due to the strong} decay of $\tilde{U}_0$. The residue at $s = 1+l$ is
\begin{align*}
  \frac{(-1)^{5l-2j}}{4} \int_{K^2 E} x^{-1/2} \frac{\Gamma(1+l)}{\Gamma(1-4l+2j)} \tilde{U}_0(1-4l+2j) \,dx.
\end{align*}
Note that $1-4l+2j \leq 1 - 4l + 2l = 1-2l \leq 0$ as long as $l \geq 1$, therefore implying that the only contributing term  in \eqref{eq:V2-expansion} is the one corresponding to $l = 0$. {In this case, we have $l=j=0$ and the contribution is
\begin{align*}
  \frac{1}{4} \int_{K^2 E} x^{-1/2} \,dx \cdot \tilde{U}_0(1) = \frac{K(\sqrt{B}-\sqrt{A})}{2} \int_0^\infty U_0(y) \,dy &= \frac{K(\sqrt{B}-\sqrt{A})}{2}\int_{-\infty}^\infty W \left( \frac{y}{K} \right) \,dy \\ &= \frac{K^2(\sqrt{B}-\sqrt{A})}{2}\hat{W}(0).
\end{align*}}
%We note that the corresponding contribution in \eqref{eq:moved-integral-mt} features
%\begin{align*}
%  \tilde{U}_0(1) = \int_0^\infty U_0(y) \,dy &= \int_0^\infty u_0 \left(\frac{y-K+1}{M} \right) \,dy = \int_{-\infty}^\infty h \left( \frac{y}{K} \right) \,dy = K\hat{h}(0).
%\end{align*}
This provides the main term $K^2 (\sqrt{B} - \sqrt{A}) {\hat{W}}(0)$ in Theorem \ref{thm:thm}.

\subsubsection{Error term {$\Sigma_3$}}

For the term \eqref{eq:et} arising from the big-O term in \eqref{eq:V2-expansion}, the sum over $c$ is already restricted to $\frac{4\pi\sqrt{x}}{100K} \leq c \leq \frac{400\pi\sqrt{x}}{K}$ due to the {properties} of $V_2$ as stated in {\cite[Proposition~A.3]{li_bounds_2011}}.  Recall that $K^2 A \leq x \leq K^2B$, so
$0.04\pi \sqrt{A} \leq c \leq 400\pi \sqrt{B}$, and the $c$-sum is therefore of constant length. The  error term hence gives a size of $O( \frac{K^{2 + L_2 + 1}}{M^{3L_2+3}} + \frac{K^3}{M^7})$. To ensure it is negligible compared to the main term {$K^2\hat{W}(0) \asymp MK$}, we require
\begin{align*}
K^{\frac{1}{3} + \frac{2}{3(4+3L_2)}} \ll M.
\end{align*}
This is ensured by the assumption $M \geqslant K^{1/3+\varepsilon}$.

\subsection{Error term {$\Sigma_{1,\err}$}}
We need to estimate the remaining term in \eqref{eq:after_decorrelation}, i.e.
\begin{equation}
\label{eq:error}
{\Sigma_{1,\err} =} \sum_{c \geq 1} \frac{\phi(c)}{c^2} \int_{K^2E} \log^2 x \left| V_2' \left( 4\pi \frac{\sqrt{x}}{c} \right) \right| \,dx.
\end{equation}

The expansion of $(V_2^*)'(x)$ is that of $V_2^*(x)$ differentiated term-by-term, and the error term is going to be one power of $x$ less, i.e.
\begin{align*}
\label{eq:V2star-expansion}
  (V_2^*)'(x) &= U_0'(x) + \sum_{l = 1}^{L_2} \sum_{j =0}^l l a_{j,l} x^{l-1}U_0^{(5l-2j)} (x) + \sum_{l = 1}^{L_2} \sum_{j =0}^l a_{j,l} x^l U_0^{(5l-2j+1)} (x) + O \left( \frac{|x|^{L_2}}{M^{3L_2+3}} + \frac{1}{M^7} \right).
\end{align*}

The big-O terms lead to bound of size $O(K^{2+L_2} M^{-(3L_2+3)})$ and $O(K^2 M^{-7})$. {For these to truly be error terms, we require} $K^{\frac{1}{3} - \frac{1}{3(4+3L_2)}} \ll M$ and $K^{1/8} \ll M$, {which is guaranteed} by the condition $M \gg K^{1/3+\varepsilon}$.

{The} pieces we are looking at are for $l \geq 1$,
\[ \sum_{c \geq 1} \frac{\phi(c)}{c^2} \int_{K^2E} \log^2 x \left|  \left( 4\pi \frac{\sqrt{x}}{c} \right) ^{l-1}  U_0^{(5l-2j)} \left( 4\pi \frac{\sqrt{x}}{c} \right) \right| \,dx, \]
and for $l \geq 0$, 
\[ \sum_{c \geq 1} \frac{\phi(c)}{c^2} \int_{K^2E} \log^2 x \left|  \left( 4\pi \frac{\sqrt{x}}{c} \right) ^{l}  U_0^{(5l-2j+1)} \left( 4\pi \frac{\sqrt{x}}{c} \right) \right| \,dx. \]
We first change variables so that $4\pi \frac{\sqrt{x}}{c} = w$. Then our integration range becomes $[4\pi K\frac{\sqrt{A}}{c},4\pi K\frac{\sqrt{B}}{c}]$, and $x = \frac{w^2 c^2}{16\pi^2}$, and  $dx = \frac{wc^2 \,dw}{8\pi^2} $.
Then the pieces we are interested in become, for $l \geq 1$, 
\[ \sum_{c \geq 1} \phi(c) \int_{\frac{4\pi K}{c} \sqrt{E}} \log^2 (wc^2) \left| w^l  U_0^{(5l-2j)} (w) \right| \,dw \]
and for $l \geq 0$, 
\[ \sum_{c \geq 1} \phi(c) \int_{\frac{4\pi K}{c} \sqrt{E}} \log^2 (wc^2) \left|  w^{l+1}  U_0^{(5l-2j+1)} (w) \right| \,dw. \]

{Note that $u_0(x) = U_0( Mx+K-1)$.} Changing variables to $w = Mx+K-1$, i.e. $x = \frac{w+1-K}{M}$, these two pieces become, for $l \geq 1$, 
\[ M \sum_{c \geq 1} \phi(c) \int_{\frac{\frac{4\pi K}{c} \sqrt{E} + 1 - K}{M} } \log^2 ((Mx+K-1)c^2) \left| \frac{(Mx+K-1)^l}{M^{5l-2j}}  u_0^{(5l-2j)} (x) \right| \,dx \]
and for $l \geq 0$, 
\[ M \sum_{c \geq 1} \phi(c) \int_{\frac{\frac{4\pi K}{c} \sqrt{E} + 1 - K}{M}} \log^2 ((Mx+K-1)c^2) \left| \frac{(Mx+K-1)^{l+1}}{M^{5l-2j+1}}  u_0^{(5l-2j+1)} (x) \right| \,dx. \]
Recall $u_0(x) = e^{-x^2} V(\frac{Mx-1+K}{K})$, {which implies its $k$-th derivative is effectively of size} $e^{-x^2}$, thus we just need to look at a small interval $x \in [-M^{\varepsilon},M^{\varepsilon}]$. Recall that the $c$-sum is effectively a sum of constant size. As such, we obtain the bound $M K^{l+1} M^{-(5l-2j+1)} = M^{-(5l-2j)} K^{l+1} \ll M^{-3l} K^{l+1}$. To ensure this is negligible compared to the main term, we require $M^{-3l} K^{l+1} \ll MK$ which means
\begin{align*}
 K^{\frac{1}{3} - \frac{1}{3(1+3l)}} \ll M.
\end{align*}
This is also ensured by the condition $M \geqslant K^{1/3+\varepsilon}$, ending the proof of Theorem \ref{thm:thm}.

\nocite{*}

\bibliographystyle{acm}
\bibliography{biblio}

\end{document}